\newtheorem{theorem}{Theorem}
\newtheorem{condition}{Condition}
\newtheorem{proposition}{Proposition}
\newtheorem{lemma}{Lemma}
\newtheorem{remark}{Remark}
\newtheorem{corollary}{Corollary}
\newcommand{\blind}{1}
\newcommand{\T}{{\rm T}}
\newcommand*{\indep}{%
  \mathbin{%
    \mathpalette{\@indep}{}%
  }%
}
\newcommand*{\nindep}{%
  \mathbin{
    \mathpalette{\@indep}{\not}
  }%
}
\newcommand*{\@indep}[2]{%
  \sbox0{$#1\perp\m@th$}
  \sbox2{$#1=$}
  \sbox4{$#1\vcenter{}$}
  \rlap{\copy0}
  \dimen@=\dimexpr\ht2-\ht4-.2pt\relax
  \kern\dimen@
  {#2}%
  \kern\dimen@
  \copy0 
} 
\def\T{{ \mathrm{\scriptscriptstyle T} }}
\def\d{{\rm d}}
\def\param{{\theta,\eta}}
\def\paraml{{\theta,\Lambda}}
\def\<{\langle}
\def\>{\rangle}
\begin{document}

\def\spacingset#1{\renewcommand{\baselinestretch}%
{#1}\small\normalsize} \spacingset{1}


\if1\blind
{
  \title{\bf A unified approach to calculation of information operators in semiparametric models}

  \author{Lu Mao\thanks{
    lmao@biostat.wisc.edu}\hspace{.2cm}\\
    Department of Biostatistics and Medical Informatics, \\School of Medicine and Public Health,
		\\ University of Wisconsin-Madison, Madison, WI, USA.\\
   }\date{}
  \maketitle
} \fi

\if0\blind
{
  \bigskip
  \bigskip
  \bigskip
  \begin{center}
    {\LARGE\bf On Causal Estimation Using $U$-Statistics}
\end{center}
  \medskip
} \fi

\bigskip
%
\spacingset{1.45} 

\begin{abstract}
The infinite-dimensional information operator for the nuisance parameter plays a key role in semiparametric inference,
as it is closely related to the regular estimability of the target parameter. Calculation of information operators has
traditionally proceeded in a case-by-case manner and has easily entailed lengthy derivations with complicated arguments.
 We develop a unified framework for this task by exploiting commonality in the form of semiparametric likelihoods.
 The general formula allows one to derive information operators with simple calculus and, if necessary
at all, a minimal amount of probabilistic evaluations. This streamlined approach shows 
its efficiency and versatility in application to a number of popular models in survival analysis,
 inverse problems, and missing data.
\end{abstract}

\noindent
Keywords:
Efficient score; Inverse problems; Nuisance parameter; Missing data; Survival analysis.

\section{Introduction}
Consider a smooth parametric model with density $p_{\theta,\psi}$, where $\theta\in\mathbb R^p$
is the parameter of interest and $\psi\in\mathbb R^q$ is a nuisance parameter. Suppose that the 
information matrix for $(\theta^\T,\psi^\T)^\T$ can be written in the following partitioned form
\begin{equation}\label{eq:par_info}
\mathcal I=\left(\begin{array}{cc}I_{\theta\theta}&I_{\theta\psi}\\
I_{\psi\theta}&I_{\psi\psi}\end{array}\right).
\end{equation} 
Then, the efficient information for $\theta$ is 
\begin{equation}\label{eq:eff_info}
I_{\theta\theta\cdot\psi}=I_{\theta\theta}-I_{\theta\psi}I_{\psi\psi}^{-1}I_{\psi\theta}.
\end{equation}
As is well known, in the presence of unknown $\psi$, 
regular and asymptotically linear estimators
for $\theta$ exist, among which the maximum likelihood estimator is the most efficient,
 if $I_{\theta\theta\cdot\psi}$ 
is positive definite \citep[Ch. 2]{bickel:1993}. 
In this paper, we call a parameter regularly estimable if a regular and asymptotically linear estimator 
exists.
Since information matrices are always non-negative definite,
here positive definiteness is equivalent to invertibility.
By rules of matrix inversion applied to \eqref{eq:eff_info}, one has that
\begin{equation}\label{eq:par_inv}
I_{\theta\theta\cdot\psi}^{-1}=I_{\theta\theta}^{-1}+I_{\theta\theta}^{-1}I_{\theta\psi}I_{\psi\psi\cdot\theta}^{-1}I_{\psi\theta} I_{\theta\theta}^{-1},
\end{equation}
where $I_{\psi\psi\cdot\theta}=I_{\psi\psi}-I_{\psi\theta} I_{\theta\theta}^{-1}I_{\theta\psi}$, 
provided that all matrix inverses involved exist. So, if $I_{\theta\theta}$ is invertible,
 then invertibility of $I_{\theta\theta\cdot\psi}$
is equivalent to that of $I_{\psi\psi\cdot\theta}$.

Similarly in semiparametric models, positivity of efficient information for the target parameter is necessary for 
its regular estimability and is usually the key condition governing the asymptotic efficiency of the maximum likelihood estimator
\citep[see, e.g., ][Ch. 25]{van:1998}. 
However, proving non-singularity of the efficient information 
in the presence of infinite-dimensional nuisance parameters
requires considerably more effort than matrix inversion. 
The difficulty arises because the information operators for the nuisance parameter 
are maps between infinite-dimensional spaces, and consequently their properties are generally more elusive than are those of matrices. 
In addition, the operators for the nuisance parameters may not be invertible at all, 
so that results analogous to \eqref{eq:par_inv} do not apply.

Consider a semiparametric model indexed by $\theta$ and an infinite-dimensional parameter $\eta$:
\begin{equation}\label{eq:main_model}
\{P_{\theta,\eta}:\theta\in\mathbb R^p, \eta\in H\},
\end{equation}
where $H$ is a nonparametric space of probability measures or positive finite measures. 
Use $p_{\theta,\eta}$ to denote the density of $P_{\theta,\eta}$ with respect to some dominating measure. 
Let $\dot l_\param$ denote the score function for $\theta$ and $B_\param: \dot H_\eta \to L_2(P_\param)$ the score operator for $\eta$,
where $\dot H_\eta \subset L_2(\eta)$ is the original tangent space for $\eta$ \citep[see, e.g.,][]{bickel:1993} and
$L_2(\mu)$ denotes the space of all $\mu$-square-integrable functions. If $\eta$ is a probability measure, then
$\dot H_\eta =L_2^0(\eta)$, the space of all $\eta$-mean zero square-integrable functions; if $\eta$ is a positive
finite measure, then $\dot H_\eta =L_2(\eta)$. In practice, one can work with a smaller set than $\dot H_\eta$,
e.g., the subset of all bounded functions with bounded variation \citep[see, e.g., ][Ch. 25]{van:1998}. 
In such cases, the score functions for $\eta$ can typically be generated by taking $B_\param a=\partial\log p_{\theta,\eta_t}/\partial t|_{t=0}$
with $\d\eta_t=(1+ta)\d\eta$.

Let $I_\param=E_\param(\dot l_\param^{\otimes 2}) $ denote the information matrix for $\theta$ had $\eta$ been known, 
where $v^{\otimes 2}=vv^\T$ for any vector $v$.
 Let $B_\param^*: 
L_2(P_\param)\to \dot H_\eta$ denote the adjoint of $B_\param$. The information operator for $(\param)$ can be
expressed in a form analogous to \eqref{eq:par_info}:
\begin{equation}\label{eq:semi_info}
\mathcal I_\param=\left\{\begin{array}{cc}I_\param&P_\param (\dot l_\param B_\param\cdot)\\
 B_\param^*\dot l_\param^\T &B_\param^*B_\param\cdot\end{array}\right\},
\end{equation} 
which acts upon $\mathbb R^p\times\dot H_\eta$. Here and after, operations on a vector with components in a
Hilbert space are understood to operate component-wise. Denote the efficient information for $\theta$
by $\tilde I_\param$. Provided that $I_\param$ is non-singular, define $V_\param: \dot H_\eta\to\dot H_\eta$ by
$V_\param=B_\param^*B_\param+K_\param$, where $K_\param=-B_\param^*\dot l_\param^\T I_\param^{-1}P_\param (\dot l_\param B_\param)$.
The operator $V_\param$ is the efficient information operator for $\eta$ in the presence of unknown $\theta$ and 
is the semiparametric analog of $I_{\psi\psi\cdot\theta}$ in \eqref{eq:par_inv}. Similar to the parametric case, 
$\tilde I_\param$ is non-singular if $V_\param: \dot H_\eta\to\dot H_\eta$ is continuously invertible, which means that
the operator has a continuous inverse. The additional continuity requirement for the inverse is automatic for matrices
but constitutes a condition for infinite-dimensional operators. 
Intuitively, continuous invertibility of $V_\param$ implies that $\theta$ and $\eta$ are not
locally confounded. Unlike the parametric case, however, continuous invertibility of $V_\param$ is generally
not straightforward to prove and may in fact not be true because
$B_\param^*B_\param$ may not be continuously invertible. The latter scenario has the serious consequence that
some aspects of $\eta$ are not estimable at the standard $n^{-1/2}$
rate. However, that does not mean that $\theta$ is necessarily not regularly estimable either. In fact,
for models suitably smooth and identifiable in $\theta$, there usually exists an alternative route to prove 
the positive definiteness of its efficient information.

Depending on whether $B_\param^*B_\param$ is continuously invertible, many of the semiparametric models in the literature
can be classified into one of the following two categories.
\begin{description}
\item[] Category 1: $B_\param^*B_\param$ can be written as the sum of a continuously invertible operator $A_\param$
and a compact operator $C_\param$, one that maps the unit ball of $\dot H_\eta$ into a totally bounded set. Because
$K_\param$ is also a compact operator, by Fredholm theory \citep{rudin:1973}, $V_\param$ is continuously invertible if it is one-to-one. 
The latter can usually be proved through local identifiability arguments. 
If the estimator is obtained by the maximum likelihood, its asymptotic properties are best handled by the Likelihood Equations
approach \citep[see \S 25.12 of][]{van:1998}.
Examples include \citet{murphy:1995}, \citet{murphy:1997}, \citet{parner:1998}, \citet{kosorok:2004}, \citet{zeng:2006}, and \citet{mao:2017}, among others.

\item[] Category 2: $B_\param^*B_\param$ is not invertible and is in fact in the form of a compact integral operator. 
For such cases,
the above approach by inverting the joint information does not work. Instead, one seeks to derive, or at least show existence
of, a least favorable direction $\tilde a$ satisfying the normal equation
\begin{equation}\label{eq:normal}
B_\param^*B_\param\tilde a=B_\param^*\dot l_\param,\hspace{5mm}\eta\mbox{-almost everywhere}.
\end{equation}
Then, the efficient score for $\theta$, defined as the projection of $\dot l_\param$ onto the orthogonal complement of 
$\{B_\param a: a\in \dot H_\eta\}$, is $\tilde l_\param=\dot l_\param-B_\param\tilde a$.
This is because  $\<\dot l_\param-B_\param\tilde a, B_\param a\>_{P_\param}=\<B_\param^*(\dot l_\param-B_\param\tilde a), a\>_\eta=0$
for all $a\in\dot H_\eta$, where $\<\cdot, \cdot\>_\mu$ denotes the inner product in $L_2(\mu)$.
The non-singularity of $\tilde I_\param=E_\param(\tilde l_\param^{\otimes 2})$ may again be proved through local identifiability arguments.
If the estimator is obtained by the maximum likelihood, its asymptotic properties are best handled by the 
Approximately Least-Favorable Sub-models approach \citep[see \S 25.11 of][]{van:1998}.
Examples include \citet{huang:1995}, \citet{huang:1996}, \citet{huang:1997}, and \citet{zeng:2016}, among others.
\end{description}

For both scenarios, it is important that one derive the specific forms of $B_\param^*B_\param$ and $B_\param^*\dot l_\param$
and check if the corresponding requirements are met to guarantee positive information for the parameter of interest. 
Such analyses usually constitute the main steps in deriving the asymptotic properties
of the maximum likelihood estimators, and should not be taken lightly since semiparametric likelihoods may be ill-behaved \citep[\S 5.2]{van:2002}.
Calculation of the information operators has mostly been treated on
a model-by-model basis in the literature. 

In this paper, we establish a unified framework for this task based on a general form of semiparametric likelihoods.
The theory developed here allows one to bypass complicated functional analytic and probabilistic arguments which are
characteristic of individual, model-specific treatments. It also offers new insights into results obtained previously on 
seemingly ad hoc basis.

\section{The general formula}
In the parametric setting, it is well known that the information matrix
can be equivalently expressed as the negative expectation of the derivative of the score function. 
For information operators in semiparametric models, one can also exploit this equivalency to simplify calculation.
The following lemma lays the foundation for the subsequent derivation of a general formula for information operators. 
Throughout, we assume that model \eqref{eq:main_model} is sufficiently smooth to warrant point-wise differentiation as a means
of score generation and to justify 
interchange of expectation and differentiation whenever appropriate. 
For a more general set-up for smooth models based on differentiability in quadratic mean, see \citet{bickel:1993}.

\begin{lemma}\label{lem:deriv}
Let $g_\param$ be a score function for model \eqref{eq:main_model} at $(\param)$.
Write $\d\eta_t=(1+t b)\d\eta$, $b\in\dot H_\eta$. Then,
\[\<B_\param^*g_\param, b\>_\eta=E_\param\left(g_\param B_\param b\right)=-E_\param\left\{\frac{\partial }{\partial t}g_{\theta,\eta_t}\Big|_{t=0}\right\}.\]
\end{lemma}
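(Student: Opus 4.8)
\emph{Proof strategy.}~The first of the two equalities is just the defining property of the adjoint, so there is nothing to establish there: for any $g_\param\in L_2(P_\param)$ and $b\in\dot H_\eta$ we have $\langle B_\param^* g_\param,b\rangle_\eta=\langle g_\param,B_\param b\rangle_{P_\param}=E_\param(g_\param B_\param b)$ by definition of $B_\param^*$. The substance of the lemma is the second equality, which is the semiparametric counterpart of the classical parametric identity expressing information as minus the expected derivative of the score.

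My plan for the second equality is to differentiate, along the path $t\mapsto\eta_t$, the mean-zero identity satisfied by every score. For each $\eta'\in H$ the object $g_{\theta,\eta'}$ is a score for the model at $(\theta,\eta')$ — it arises by differentiating a log-likelihood along a submodel through $P_{\theta,\eta'}$ — and therefore integrates to zero: $\int g_{\theta,\eta'}\,p_{\theta,\eta'}\,\d\mu=0$, with $\mu$ the dominating measure. I would substitute $\eta'=\eta_t$, where $\d\eta_t=(1+tb)\d\eta$, to get $\int g_{\theta,\eta_t}\,p_{\theta,\eta_t}\,\d\mu=0$ for all $t$ near $0$, differentiate this identity at $t=0$, and invoke the standing smoothness assumption to move $\partial/\partial t$ inside the integral. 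The product rule then yields
\[
0=E_\param\!\left(\frac{\partial}{\partial t}g_{\theta,\eta_t}\Big|_{t=0}\right)+\int g_\param\left(\frac{\partial}{\partial t}p_{\theta,\eta_t}\Big|_{t=0}\right)\d\mu .
\]
Finally I would write $\frac{\partial}{\partial t}p_{\theta,\eta_t}\big|_{t=0}=p_\param\,\frac{\partial}{\partial t}\log p_{\theta,\eta_t}\big|_{t=0}=p_\param\,B_\param b$, which is precisely how the score operator acts on the direction $b$ along this path, so that the second term becomes $E_\param(g_\param B_\param b)$; rearranging gives $E_\param(g_\param B_\param b)=-E_\param\{\partial_t g_{\theta,\eta_t}|_{t=0}\}$, as claimed.

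The only delicate step is the interchange of differentiation and integration — both in differentiating $\int g_{\theta,\eta_t}p_{\theta,\eta_t}\,\d\mu$ and, implicitly, in writing $\partial_t p_{\theta,\eta_t}|_{t=0}=p_\param B_\param b$. This is exactly what the blanket smoothness hypothesis on model \eqref{eq:main_model}, stated just before the lemma, is meant to license. Made fully rigorous it would require exhibiting a fixed $\mu$-integrable envelope for the difference quotients $t^{-1}(g_{\theta,\eta_t}p_{\theta,\eta_t}-g_\param p_\param)$ on a neighborhood of $t=0$ and appealing to dominated convergence — routine in the applications to follow, where the perturbations are linear in $t$ with bounded $b$ and $g_\param$ depends smoothly on $\eta$, but not worth belaboring in the general statement.
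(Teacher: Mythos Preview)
Your argument is correct and is the standard one: differentiate the identity $E_{\theta,\eta_t}(g_{\theta,\eta_t})=0$ at $t=0$, apply the product rule, and recognize $\partial_t\log p_{\theta,\eta_t}|_{t=0}=B_\param b$. The paper does not actually supply a proof of this lemma; it states the result as foundational and relies on the blanket smoothness assumption announced just before it (``to justify interchange of expectation and differentiation whenever appropriate''), which is precisely the step you flag as delicate. So your proof fills in exactly what the paper leaves implicit, and there is no divergence in approach to discuss.
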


The following theorem presents the formulas for the score functions, score operators, and information operators
based on a general form of semiparametric likelihoods. 
The proof involves straightforward application of 
Lemma \ref{lem:deriv} with $g_\param=\dot l_\param$ or $B_\param a$.
Unless otherwise specified, we use $\dot f$ and $\ddot f$ to denote the first and second derivatives of 
a generic smooth function $f$. 

\begin{theorem}\label{thm:main}
Suppose that the log-likelihood for model \eqref{eq:main_model} takes the following form:
\begin{equation}\label{eq:lik}
\log p_\param=r(\theta)+f\left\{\int g(u;\theta)\d\eta(u)\right\}+L(\log\dot\eta),
\end{equation}
where $r$, $f$, and $g$ are real-valued data-dependent functions 
and $L$ is a data-dependent linear functional on the closed linear span of the space for $\log\dot\eta$, 
the log-density of $\eta$ with respect to certain dominating measure. Write $\dot g(u;\theta)=\partial g(u;\theta)/\partial\theta$, 
$\dot f_\param=\dot f(g_\param)$, and $\ddot f_\param=\ddot f(g_\param)$, where $g_\param=\int g(u;\theta)\d\eta(u)$.
Then,  if $\dot H_\eta=L_2(\eta)$, we have that
\begin{align}\label{eq:score_info}
\dot l_\param&=\dot r(\theta)+\dot f_\param\int\dot g(u;\theta)\d\eta(u),\notag\\
B_\param a&=\dot f_\param\int g(u;\theta)a(u)\d\eta(u)+L(a),\notag\\
B_\param^*\dot l_\param(\cdot)&=\int \beta_\param(\cdot, u)\d\eta(u)+\alpha_\param(\cdot),\notag\\
B_\param^*B_\param a(\cdot)&=\gamma_\param(\cdot)a(\cdot)+\int \kappa_\param(\cdot, u) a(u)\d\eta(u),
\end{align}
where
\begin{align}\label{eq:functions}
\alpha_\param(\cdot)&=-E_\param\left\{\dot f_\param\dot g(\cdot;\theta)\right\},\hspace{5mm}
\beta_\param(\cdot, u)=-E_\param\left\{\ddot f_\param g(\cdot;\theta)\dot g(u;\theta)\right\},\notag\\
\gamma_\param(\cdot)&=-E_\param\left\{\dot f_\param g(\cdot;\theta)\right\},\hspace{5mm}
\kappa_\param(\cdot, u)=-E_\param\left\{\ddot f_\param g(\cdot;\theta)g(u;\theta)\right\}.
\end{align}
 If $\dot H_\eta=L_2^0(\eta)$, the results are the same 
except that $B_\param^*B_\param a(\cdot)$ in \eqref{eq:score_info} is replaced by
\[B_\param^*B_\param a(\cdot)=\gamma_\param(\cdot)a(\cdot)-\int\gamma_\param(u)a(u)\d\eta(u) +\int \kappa_\param(\cdot, u) a(u)\d\eta(u),\]
and the functions $\dot g(\cdot;\theta)$, $g(\cdot;\theta)$,
and $g(u;\theta)$ on the right hand sides of the equations in \eqref{eq:functions} are replaced by
$\dot g(\cdot;\theta)-\dot g_\param$, $g(\cdot;\theta)-g_\param$,
and $g(u;\theta)-g_\param$, respectively, where $\dot g_\param=\int\dot g(u;\theta)\d\eta(u)$.
\end{theorem}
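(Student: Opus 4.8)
The plan is to compute each of the four objects in \eqref{eq:score_info} directly from the likelihood form \eqref{eq:lik}, using Lemma \ref{lem:deriv} to turn adjoint inner products into negative expectations of derivatives. First I would derive the score for $\theta$: differentiating $\log p_\param$ in \eqref{eq:lik} with respect to $\theta$, the term $L(\log\dot\eta)$ does not involve $\theta$ and drops out, while the chain rule on $f\{g_\param\}$ gives $\dot f_\param\,\partial g_\param/\partial\theta=\dot f_\param\int\dot g(u;\theta)\d\eta(u)$, plus $\dot r(\theta)$; this yields the first line. Next I would derive the score operator $B_\param a$ by taking $\d\eta_t=(1+ta)\d\eta$ and differentiating $\log p_{\theta,\eta_t}$ at $t=0$: the integral $\int g(u;\theta)\d\eta_t(u)=g_\param+t\int g(u;\theta)a(u)\d\eta(u)$, so the $f$-term contributes $\dot f_\param\int g(u;\theta)a(u)\d\eta(u)$; for the $L(\log\dot\eta_t)$ term, since $\log\dot\eta_t=\log\dot\eta+\log(1+ta)$ and $\partial_t\log(1+ta)|_{t=0}=a$, linearity of $L$ gives $L(a)$. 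That produces the second line.

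For the two composite operators I would invoke Lemma \ref{lem:deriv} with $g_\param$ replaced by $\dot l_\param$ and by $B_\param a$ respectively. For $B_\param^*\dot l_\param$, the lemma says $\<B_\param^*\dot l_\param,b\>_\eta=-E_\param\{\partial_t \dot l_{\theta,\eta_t}|_{t=0}\}$ for $\d\eta_t=(1+tb)\d\eta$. From the first line of \eqref{eq:score_info}, $\dot l_{\theta,\eta_t}=\dot r(\theta)+\dot f(g_{\theta,\eta_t})\int\dot g(u;\theta)\d\eta_t(u)$; differentiating in $t$ and using $\partial_t g_{\theta,\eta_t}|_{t=0}=\int g(\cdot;\theta)b\,\d\eta$ and $\partial_t\int\dot g\,\d\eta_t|_{t=0}=\int\dot g(u;\theta)b(u)\d\eta(u)$ gives two terms: $\ddot f_\param\big(\int g(v;\theta)b(v)\d\eta(v)\big)\int\dot g(u;\theta)\d\eta(u)$ and $\dot f_\param\int\dot g(u;\theta)b(u)\d\eta(u)$. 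Taking $-E_\param$ and matching against $\<\,\cdot\,,b\>_\eta=\int(\cdot)(u)b(u)\d\eta(u)$, I read off the kernel $\beta_\param(u,\cdot)=-E_\param\{\ddot f_\param g(u;\theta)\dot g(\cdot;\theta)\}$ integrated against $\d\eta$ plus the diagonal piece $\alpha_\param(u)=-E_\param\{\dot f_\param\dot g(u;\theta)\}$, giving the third line. An entirely parallel computation with $g_\param=B_\param a$, differentiating $B_{\theta,\eta_t}a=\dot f(g_{\theta,\eta_t})\int g(u;\theta)a(u)\d\eta_t(u)+L(a)$ in $t$ (note $L(a)$ is $t$-free and the perturbation of the measure inside $\int g a\,\d\eta_t$ contributes $\int g(u;\theta)a(u)b(u)\d\eta(u)$), yields the diagonal multiplier $\gamma_\param(\cdot)=-E_\param\{\dot f_\param g(\cdot;\theta)\}$ and the integral kernel $\kappa_\param(\cdot,u)=-E_\param\{\ddot f_\param g(\cdot;\theta)g(u;\theta)\}$, i.e.\ the fourth line.

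Finally, for the case $\dot H_\eta=L_2^0(\eta)$ I would note that the admissible perturbation directions $a,b$ must satisfy $\int a\,\d\eta=\int b\,\d\eta=0$, so the correct parametrization is $\d\eta_t=(1+t(a-\int a\,\d\eta))\d\eta$ or equivalently one projects onto the mean-zero subspace. Concretely, $B_\param a$ for $a\in L_2(\eta)$ acts the same, but the adjoint $B_\param^*$ now maps into $L_2^0(\eta)$, so one post-composes with the projection $h\mapsto h-\int h\,\d\eta$; this is exactly what turns $\gamma_\param(\cdot)a(\cdot)$ into $\gamma_\param(\cdot)a(\cdot)-\int\gamma_\param(u)a(u)\d\eta(u)$ and replaces $g(\cdot;\theta)$, $\dot g(\cdot;\theta)$ by their centered versions $g(\cdot;\theta)-g_\param$, $\dot g(\cdot;\theta)-\dot g_\param$ (centering inside the expectation is harmless for the $b$-argument since $b$ integrates to zero, but bookkeeping is cleanest if one centers symmetrically). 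I expect the main obstacle to be precisely this bookkeeping in the mean-zero case — keeping straight which arguments get centered, and verifying that the projection only needs to be applied once (on the output side) because the mean-zero constraint on the input is automatically respected — rather than any deep analytic difficulty; the smoothness assumptions stated before the theorem license all the interchanges of $E_\param$ and $\partial_t$ that the computation needs.
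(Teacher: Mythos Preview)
Your proposal is correct and follows essentially the same route as the paper: derive $\dot l_\param$ and $B_\param a$ by direct differentiation, then invoke Lemma~\ref{lem:deriv} with $g_\param=\dot l_\param$ and $g_\param=B_\param a$ to obtain the adjoint quantities by computing $-E_\param\{\partial_t g_{\theta,\eta_t}|_{t=0}\}$ and reading off the kernel against $b$.

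The only place your treatment and the paper's differ in emphasis is the $L_2^0(\eta)$ case. You frame the fix as ``post-compose the $L_2(\eta)$-adjoint with the mean-zero projection,'' which handles the output side. The paper instead addresses the input side: for Lemma~\ref{lem:deriv} to apply to $g_\param=B_\param a$, the family $g_{\theta,\eta_t}$ must be a score at $(\theta,\eta_t)$, and a fixed $a\in L_2^0(\eta)$ need not lie in $L_2^0(\eta_t)$. The paper therefore sets $a_{\eta_t}=a-\eta_t a$ and differentiates $B_{\theta,\eta_t}a_{\eta_t}$. The two devices give the same answer (the extra $t$-derivative of $a_{\eta_t}$ is a data-free constant whose contribution is absorbed by the centering), so this is a matter of bookkeeping rather than a gap --- and you already flagged the bookkeeping here as the delicate point.
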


\begin{remark}\label{rem:extend}
For notational simplicity, we have assumed that the function $g$ in Theorem \ref{thm:main} is real-valued. 
It is straightforward to extend the results to the case with vector-valued
$g$. Furthermore, instead of a single nuisance parameter $\eta$, 
one can extend the framework to accommodate multiple nuisance parameters $\eta=(\eta_1,\ldots,\eta_K)^\T$. In such cases, 
the original tangent space for $\eta$ will be $\dot H_1\times\ldots\times\dot H_K$, where $\dot H_k$ is the 
original tangent space for $\eta_k$ $(k=1,\ldots, K)$. Such extensions will be considered and illustrated in \S \ref{sec:rec}.
\end{remark}

Under the condition of Theorem \ref{thm:main}, the information operator for $\eta$ can be written as
the sum of a multiplication operator with multiplier $\gamma_\param$
and a compact Hilbert-Schmidt integral operator with kernel $k_\param$, insofar as  
$k_\param$ is square-integrable by $\eta\times\eta$.
The multiplication operator is continuously invertible if
$\gamma_\param(\cdot)$ is bounded above and away from zero. If so,
 the model is of Category 1.
Likewise, if $\gamma_\param\equiv 0$, then it is of Category 2.
The local identifiability condition needed for both categories to ensure non-singularity of $\tilde I_\param$ can be stated formally
as follows.
\begin{condition}[Local identifiability]\label{cond:local_id}
If 
\begin{equation}\label{eq:local_id}
h^\T\dot r(\theta)+\dot f_\param\int h^\T\dot g(u;\theta)\d\eta(u)+\dot f_\param\int g(u;\theta)a(u)\d\eta(u)+L(a)=0
\end{equation}
$P_\param$-almost surely for some $h\in\mathbb R^p$ and $a\in\dot H_\eta$, then $h=0$ and $a(\cdot)\equiv 0$.
\end{condition}
Since the left hand side of \eqref{eq:local_id} is a score function in the general form $h^\T\dot l_\param+B_\param a$,
Condition \ref{cond:local_id} simply says that the joint score operator is one-to-one so that local alternatives to $(\param)$
in all possible directions can be identified. In particular, it implies that $I_\param$ is positive definite.
To use it to show that $V_\param$ is one-to-one for Category 1 problems, one may take $h=I_\param^{-1}P_\param(\dot l_\param B_\param b)$
and $a=-b$ to find that $V_\param b=0$ implies $b(\cdot)\equiv 0$.
For Category 2 problems, one may take $a=-h^\T\tilde a$ to show that $\tilde I_\param$ is positive definite
provided that $\tilde a$ as a solution to \eqref{eq:normal} exists. 
\begin{corollary}\label{cor:pd}
Suppose that Condition \ref{cond:local_id} is satisfied.
Then, $\tilde I_\param$ is positive definite if either of the following is true:
\begin{description}
\item[](1) There exist $M>0$ such that $M^{-1}\leq \gamma_\param(\cdot)\leq M$, or
\item[](2) $\gamma_\param\equiv 0$ and the solution $\tilde a$ to
\begin{equation}\label{eq:c2_normal}
\int\kappa_\param(\cdot, u)\tilde a(u)\d\eta(u)=B_\param^*\dot l_\param(\cdot)
\end{equation}
exists.
\end{description}
\end{corollary}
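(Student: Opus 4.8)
The plan is to reduce both cases, via Condition \ref{cond:local_id}, to the statement that a vanishing joint score $h^\T\dot l_\param + B_\param a = 0$ forces $h=0$. Since $\tilde I_\param = E_\param(\tilde l_\param^{\otimes 2})$ is automatically non-negative definite, it is positive definite exactly when its null space is trivial, i.e.\ when $h^\T\tilde l_\param = 0$ $P_\param$-almost surely implies $h=0$ (relevant directly to case (2)), or---equivalently, via the matrix-inversion route recalled in \S\,2---when $V_\param$ is continuously invertible (case (1)). The two parts differ only in how one produces a representation of the relevant efficient-score relation as such a joint score.

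For part (1), I would first record that under $M^{-1}\le\gamma_\param(\cdot)\le M$ the multiplication operator $a\mapsto\gamma_\param a$ is continuously invertible on $\dot H_\eta$, with inverse of operator norm at most $M$, while $B_\param^*B_\param$ minus this multiplication operator is the Hilbert--Schmidt integral operator with kernel $\kappa_\param$ (compact, as $\kappa_\param\in L_2(\eta\times\eta)$), and $K_\param$ is finite rank hence compact; so $V_\param$ is a continuously invertible operator plus a compact one and, by the Fredholm alternative, is continuously invertible as soon as it is one-to-one. To check injectivity, take $b\in\dot H_\eta$ with $V_\param b=0$ and set $h=I_\param^{-1}P_\param(\dot l_\param B_\param b)$. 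Expanding $\langle V_\param b,b\rangle_\eta=0$, using $\langle B_\param^*B_\param b,b\rangle_\eta=\|B_\param b\|^2_{P_\param}$ and $\langle B_\param^*\dot l_\param,b\rangle_\eta=P_\param(\dot l_\param B_\param b)=I_\param h$, gives $\|B_\param b\|^2_{P_\param}=h^\T I_\param h$; combined with $\langle B_\param b,h^\T\dot l_\param\rangle_{P_\param}=h^\T I_\param h=\|h^\T\dot l_\param\|^2_{P_\param}$, this yields $\|B_\param b-h^\T\dot l_\param\|^2_{P_\param}=0$, that is, $h^\T\dot l_\param+B_\param(-b)=0$ $P_\param$-a.s. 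Condition \ref{cond:local_id} then forces $h=0$ and $b\equiv 0$, so $V_\param$ is continuously invertible and hence $\tilde I_\param$ is non-singular, i.e.\ positive definite.

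For part (2), with $\gamma_\param\equiv 0$ the operator $B_\param^*B_\param$ reduces to the pure integral operator and the normal equation \eqref{eq:normal} becomes \eqref{eq:c2_normal}; existence of a solution $\tilde a\in\dot H_\eta$ means $\dot l_\param-B_\param\tilde a$ lies in the null space of $B_\param^*$, i.e.\ is orthogonal to $\{B_\param a:a\in\dot H_\eta\}$, so that $\tilde l_\param=\dot l_\param-B_\param\tilde a$ is indeed the efficient score. If $h^\T\tilde l_\param=0$ $P_\param$-a.s.\ for some $h\in\mathbb R^p$, then $h^\T\dot l_\param+B_\param(-h^\T\tilde a)=0$ $P_\param$-a.s., and Condition \ref{cond:local_id} with $a=-h^\T\tilde a$ gives $h=0$; hence $\tilde I_\param$ is positive definite.

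The main obstacle is the algebraic step in part (1) that collapses $V_\param b=0$ into the vanishing of a single joint score: the cancellation $\|B_\param b-h^\T\dot l_\param\|^2_{P_\param}=0$ relies on the choice $h=I_\param^{-1}P_\param(\dot l_\param B_\param b)$ making the cross term equal to both $\|B_\param b\|^2_{P_\param}$ and $\|h^\T\dot l_\param\|^2_{P_\param}$. I also expect the compactness bookkeeping to need a word of care: the Fredholm argument, and with it part (1), depends on $\kappa_\param$ being $\eta\times\eta$-square-integrable so that the integral operator is genuinely Hilbert--Schmidt, the implicit regularity requirement already flagged after Theorem \ref{thm:main}. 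Part (2) is essentially mechanical once one observes that solvability of \eqref{eq:c2_normal} is precisely what is needed to realize the efficient score in the form $\dot l_\param-B_\param\tilde a$ and thereby bring Condition \ref{cond:local_id} to bear.
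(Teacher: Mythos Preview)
Your proposal is correct and follows essentially the same route as the paper's own argument, which is sketched in the discussion surrounding Condition~\ref{cond:local_id}: for part~(1) the paper also invokes Fredholm theory together with the choice $h=I_\param^{-1}P_\param(\dot l_\param B_\param b)$, $a=-b$, and for part~(2) the choice $a=-h^\T\tilde a$. You have merely supplied the quadratic-form computation that the paper leaves implicit, and correctly flagged the $\eta\times\eta$-square-integrability of $\kappa_\param$ as the regularity assumption underpinning the compactness step.
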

In the second case, solution of the least favorable direction $\tilde a$ usually starts with taking derivatives on both sides of \eqref{eq:c2_normal}.
For example, \citet{huang:1997} took this route to show that the solution exists for the Cox model with case-2 interval-censored data.
In particular, this approach requires that $B_\param^*\dot l_\param(\cdot)$ be a smooth function and lie in the range of $B_\param B_\param^*$.

The following two propositions can usually simply calculations for Category 2 problems. The first one is fairly intuitive:
if the density of $\eta$ does not appear in the likelihood, 
then information on some aspects thereof cannot be recovered in the first order.
Thus, one expects $B_\param^*B_\param$ to be not continuously invertible.
\begin{proposition}\label{prop:c2}
Under the conditions of Theorem \ref{thm:main}, if $L\equiv 0$, then $\gamma_\param(\cdot)=0$ $\eta$-almost everywhere.
\end{proposition}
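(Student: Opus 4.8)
The plan is to read off the form of the score operator $B_\param$ when $L\equiv 0$ and combine it with the elementary fact that every score function has $P_\param$-mean zero. By Theorem~\ref{thm:main} with $L\equiv 0$, $B_\param a=\dot f_\param\int g(u;\theta)a(u)\,\d\eta(u)$ for $a\in\dot H_\eta$; when $\dot H_\eta=L_2^0(\eta)$ this equals $\dot f_\param\int\{g(u;\theta)-g_\param\}a(u)\,\d\eta(u)$, since then $\int a\,\d\eta=0$. As $B_\param a$ is generated by $\partial\log p_{\theta,\eta_t}/\partial t|_{t=0}$ with $\d\eta_t=(1+ta)\d\eta$, and $p_{\theta,\eta_t}$ is a probability density for each $t$ near $0$, differentiating under the integral sign (as the standing smoothness assumption permits) gives $E_\param(B_\param a)=0$ for every $a\in\dot H_\eta$.

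Next I would move $E_\param$ inside the $\eta$-integral by Fubini's theorem, which is legitimate under the same regularity since $E_\param\int|\dot f_\param g(u;\theta)|\,\d\eta(u)<\infty$. Recalling from \eqref{eq:functions} that $\gamma_\param(\cdot)=-E_\param\{\dot f_\param g(\cdot;\theta)\}$ when $\dot H_\eta=L_2(\eta)$ and $\gamma_\param(\cdot)=-E_\param\{\dot f_\param(g(\cdot;\theta)-g_\param)\}$ when $\dot H_\eta=L_2^0(\eta)$, this yields in both cases
\[0=E_\param(B_\param a)=-\<\gamma_\param,a\>_\eta\qquad\text{for all }a\in\dot H_\eta.\]
If $\dot H_\eta=L_2(\eta)$, letting $a$ range over all of $L_2(\eta)$ forces $\gamma_\param(\cdot)=0$ $\eta$-almost everywhere. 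If $\dot H_\eta=L_2^0(\eta)$, orthogonality to every $\eta$-mean-zero $a$ only shows that $\gamma_\param$ is $\eta$-almost everywhere equal to a constant; but $\int\gamma_\param\,\d\eta=-E_\param\{\dot f_\param\int(g(u;\theta)-g_\param)\,\d\eta(u)\}=0$, since $\eta$ is a probability measure and $\int g(u;\theta)\,\d\eta(u)=g_\param$, so that constant must be zero and again $\gamma_\param(\cdot)=0$ $\eta$-almost everywhere.

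The only delicate points are the two interchange-of-limits steps: the mean-zero identity $E_\param(B_\param a)=0$ and the Fubini step. Both are covered by the blanket smoothness assumption in force since Lemma~\ref{lem:deriv}, so I expect establishing $E_\param(B_\param a)=0$ at the right level of generality — in particular that $(1+ta)\d\eta$ remains a valid element of $H$ for small $t$ — to be the most technical, though entirely routine, ingredient.
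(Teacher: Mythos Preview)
Your proposal is correct and follows essentially the same route as the paper: use $E_\param(B_\param a)=0$ together with the form of $B_\param a$ when $L\equiv 0$ to obtain $\<\gamma_\param,a\>_\eta=0$ for all $a\in\dot H_\eta$, hence $\gamma_\param=0$ $\eta$-a.e. Your treatment of the $\dot H_\eta=L_2^0(\eta)$ case---arguing that $\gamma_\param$ must be constant and then using $\int\gamma_\param\,\d\eta=0$ to kill the constant---is in fact more careful than the paper's one-line proof, which leaves that step implicit.
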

\begin{proof}
With $L\equiv 0$, use $E_\param(B_\param a)=0$ to find that $\int \gamma_\param a\d\eta$ for all $a \in\dot H_\eta$, implying $\gamma_\param(\cdot)=0$
$\eta$-almost everywhere.
\end{proof}

For Category 2 problems, derivation of the normal equation \eqref{eq:c2_normal} can be further simplified if $p_\param$ is a conditional
density in certain form.
\begin{proposition}\label{prop:cond}
Suppose that the density for model \eqref{eq:main_model} can be written in the following form:
\begin{equation}
p_\param(\mathcal O_1,\mathcal O_2)=p_\param^{(1\mid 2)}(\mathcal O_1\mid\mathcal O_2)p^{(2)}(\mathcal O_2),
\end{equation}
where $p_\param^{(1\mid 2)}(\cdot\mid\cdot)$ is the conditional density of $\mathcal O_1$ given $\mathcal O_2$
and $p^{(2)}(\cdot)$ is the marginal density of $\mathcal O_2$. If the log-likelihood $\log p_\param^{(1\mid 2)}(\mathcal O_1\mid\mathcal O_2)$ can be written in the form of \eqref{eq:lik} with $r(\cdot)\equiv 0$, $L\equiv 0$, 
$f(\cdot)=\tilde f(\cdot ,\mathcal O_1)$ and $g(\cdot;\theta)=\tilde g(\cdot,\mathcal O_2;\theta)$ for some deterministic functions
$\tilde f$ and $\tilde g$, then
$\alpha_\param(\cdot)\equiv 0$ and $\gamma_\param(\cdot)\equiv 0$. Hence, the normal equation \eqref{eq:c2_normal} becomes
\begin{equation}\label{eq:normal1}
\int \kappa_\param(\cdot, u) a(u)\d\eta(u)=\int \beta_\param(\cdot, u)\d\eta(u).
\end{equation}
\end{proposition}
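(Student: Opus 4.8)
The plan is to isolate the single fact $E_\param(\dot f_\param\mid\mathcal O_2)=0$; everything else then follows by conditioning and substitution. First I would note that the full log-likelihood,
\[\log p_\param=\log p^{(2)}(\mathcal O_2)+\tilde f(g_\param,\mathcal O_1),\qquad g_\param=\int\tilde g(u,\mathcal O_2;\theta)\,\d\eta(u),\]
is itself of the form \eqref{eq:lik}, with $r(\theta)=\log p^{(2)}(\mathcal O_2)$ (constant in $\theta$), $L\equiv0$, and the given $f$ and $g$; hence Theorem \ref{thm:main} applies directly, with $\dot r(\theta)=0$ and with $E_\param$ the expectation under the joint law of $(\mathcal O_1,\mathcal O_2)$. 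The point to record is that $g_\param$ — and therefore also $\dot g_\param$, $g(\cdot;\theta)=\tilde g(\cdot,\mathcal O_2;\theta)$, and $\dot g(\cdot;\theta)$ — is a function of $\mathcal O_2$ alone.

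The crux is the identity $E_\param(\dot f_\param\mid\mathcal O_2)=0$. With $\mathcal O_2$ held fixed, the conditional density of $\mathcal O_1$ given $\mathcal O_2$ is $\exp\{\tilde f(s,\cdot)\}$ evaluated at $s=g_\param$, so $\int\exp\{\tilde f(s,o_1)\}\,\d\mu_{\mathcal O_2}(o_1)=1$ for every admissible index value $s$; differentiating this identity in $s$ under the integral sign and setting $s=g_\param$ gives $E_\param\{\dot{\tilde f}(g_\param,\mathcal O_1)\mid\mathcal O_2\}=E_\param(\dot f_\param\mid\mathcal O_2)=0$, where $\dot{\tilde f}$ is the derivative of $\tilde f$ in its first argument. (Relatedly: $\dot l_\param=\dot f_\param\dot g_\param$ is the conditional score for $\theta$ in $p_\param^{(1\mid2)}$, so $\dot g_\param\,E_\param(\dot f_\param\mid\mathcal O_2)=E_\param(\dot l_\param\mid\mathcal O_2)=0$.) Granting this, since $\dot g(\cdot;\theta)$ is $\mathcal O_2$-measurable the tower property yields
\[\alpha_\param(\cdot)=-E_\param\{\dot f_\param\dot g(\cdot;\theta)\}=-E_\param\big\{\dot g(\cdot;\theta)\,E_\param(\dot f_\param\mid\mathcal O_2)\big\}=0,\]
and the identical computation with $g(\cdot;\theta)$ in place of $\dot g(\cdot;\theta)$ gives $\gamma_\param(\cdot)=0$ (which also follows at once from Proposition \ref{prop:c2}, as $L\equiv0$). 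In the $\dot H_\eta=L_2^0(\eta)$ version of Theorem \ref{thm:main} the functions $\dot g(\cdot;\theta)$ and $g(\cdot;\theta)$ are replaced by $\dot g(\cdot;\theta)-\dot g_\param$ and $g(\cdot;\theta)-g_\param$, which are still $\mathcal O_2$-measurable, so the conclusion is unchanged; the same remarks apply, with $s$ ranging over a finite-dimensional set, in the vector-valued-$g$ extension of Remark \ref{rem:extend}. Finally, putting $\alpha_\param\equiv0$ into $B_\param^*\dot l_\param(\cdot)=\int\beta_\param(\cdot,u)\d\eta(u)+\alpha_\param(\cdot)$ and $\gamma_\param\equiv0$ into the expression for $B_\param^*B_\param$ turns the normal equation \eqref{eq:c2_normal} into \eqref{eq:normal1}.

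The step I expect to be the main obstacle is making the key identity $E_\param(\dot f_\param\mid\mathcal O_2)=0$ fully rigorous: one must check that the conditional law of $\mathcal O_1$ given $\mathcal O_2$ admits a dominating measure not depending on the index $s$ (so that the normalization relation really is an identity in $s$ that may be differentiated) and that differentiation under the integral sign is permitted. Both are instances of the blanket smoothness and interchange-of-differentiation-and-integration assumptions placed on model \eqref{eq:main_model}, but they deserve an explicit note, since outside such regularity the reduction to \eqref{eq:normal1} need not hold.
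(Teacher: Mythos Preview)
Your argument is correct and follows the same route as the paper: invoke Proposition~\ref{prop:c2} (via $L\equiv0$) for $\gamma_\param\equiv0$, and for $\alpha_\param\equiv0$ use that $\dot f_\param$ has conditional mean zero given $\mathcal O_2$ (being the conditional score) together with the $\mathcal O_2$-measurability of $\dot g(\cdot;\theta)$. Your version is more explicit---deriving $E_\param(\dot f_\param\mid\mathcal O_2)=0$ by differentiating the normalization identity, treating the $L_2^0(\eta)$ case, and flagging the regularity caveat---but the substance is identical.
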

\begin{proof}
In light of Proposition \ref{prop:c2}, we only need to show that $\alpha_\param(\cdot)\equiv 0$. Because $\dot f_\param$
is now a score function for the conditional density of $\mathcal O_1$ given $\mathcal O_2$, we have that
$E_\param(\dot f_\param\mid\mathcal O_2)=0$. The result follows from the fact that $\dot g(\cdot;\theta)$ depends on $\mathcal O_2$ only.
\end{proof}
Proposition \ref{prop:cond} applies to all standard regression models with interval-censored data where the examination 
times are conditionally independent of the event times given covariates \citep[see, e.g., ][]{sun:2007}. Indeed, let $T$ be the event time of interest, $U$
be the sequence of examination times, $\delta(T, U)$ be the observed indicators for the affiliation of $T$ to the intervals partitioned by $U$,
and $Z$ be the covariates. 
If $T\indep U\mid Z$ and  $(\param)$ parametrizes 
only the conditional distribution of $T$ given $Z$, the conditions of Proposition \ref{prop:cond} are satisfied with 
$\mathcal O_1=\delta(T, U)$ and $\mathcal O_2=(U, Z)$.

Finally, we consider a nonparametric model $\{P_\eta: \eta\in H\}$ as a special case of \eqref{eq:main_model}. Here,
one is interested in, $\chi(\eta)$, a functional of $\eta$, with pathwise derivative $\dot\chi(\eta)$. Then, the functional
$\chi(\eta)$ is regularly estimable under $P_\eta$ if a solution $\tilde a$ to the normal equation
\begin{equation}\label{eq:np_normal}
B_\eta^*B_\eta\tilde a=\dot\chi(\eta)
\end{equation}
exists, where $B_\eta$ is the score operator for $\eta$.
Then, the efficient influence function is $B_\param\tilde a$.
The score and information operators can be calculated similarly to Theorem \ref{thm:main}.
\begin{corollary}\label{cor:np}
Suppose that the log-likelihood for model $\{P_\eta: \eta\in H\}$ takes the following form:
\begin{equation}\label{eq:np_lik}
\log p_\param=f\left\{\int g(u)\d\eta(u)\right\}+L(\log\dot\eta),
\end{equation}
where $f$, $g$, and $L$ are data-dependent functions defined analogously to their counterparts in Theorem \ref{thm:main}.
 Write 
$\dot f_\eta=\dot f(g_\eta)$, and $\ddot f_\eta=\ddot f(g_\eta)$, where $g_\eta=\int g(u)\d\eta(u)$.
Then, if $\dot H_\eta=L_2(\eta)$, we have that
\begin{align*}
B_\eta a&=\dot f_\eta\int g(u)a(u)\d\eta(u)+L(a),\\
B_\eta^*B_\eta a(\cdot)&=\gamma_\eta(\cdot)a(\cdot)+\int \kappa_\eta(\cdot, u) a(u)\d\eta(u).\hspace{5mm}a\in\dot H_\eta,\\
\end{align*}
where 
\begin{align}\label{eq:np_functions}
\gamma_\eta(\cdot)=-E_\eta\left\{\dot f_\eta g(\cdot)\right\},\hspace{5mm}
\kappa_\eta(\cdot, u)=-E_\eta\left\{\ddot f_\eta g(\cdot)g(u)\right\}.
\end{align}
If $\dot H_\eta=L_2^0(\eta)$, then $B_\eta^*B_\eta a(\cdot)$ is replaced by
\[B_\eta^*B_\eta a(\cdot)=\gamma_\eta(\cdot)a(\cdot)-\int\gamma_\eta(u)a(u)\d\eta(u)+\int \kappa_\eta(\cdot, u) a(u)\d\eta(u),\]
and $g(\cdot)$ and $g(u)$ on the right hand sides of the equations in \eqref{eq:np_functions} are replaced by
 $g(\cdot)-g_\eta$,
and $g(u)-g_\eta$, respectively. Furthermore, if $L\equiv 0$, then $\gamma_\eta(\cdot)=0$
$\eta$-almost everywhere.
\end{corollary}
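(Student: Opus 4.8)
The plan is to treat Corollary \ref{cor:np} as the specialization of Theorem \ref{thm:main} with no Euclidean component and to reuse its two-step recipe: first read off the score operator $B_\eta$ by point-wise differentiation of \eqref{eq:np_lik}, then obtain $B_\eta^*B_\eta$ from Lemma \ref{lem:deriv} by differentiating $B_{\eta_t}a$ along a second path. Along $\d\eta_t=(1+ta)\d\eta$ with $a\in\dot H_\eta$ one has $\int g(u)\d\eta_t(u)=g_\eta+t\int g(u)a(u)\d\eta(u)$ and $\partial_t\log\dot\eta_t|_{t=0}=a$, so the chain rule and linearity of $L$ give $B_\eta a=\dot f_\eta\int g(u)a(u)\d\eta(u)+L(a)$, which is the first displayed formula; and, since it will matter below, note that when $\eta$ is a probability measure the constraint $\int a\,\d\eta=0$ lets one rewrite the integral with $g$ replaced by $g-g_\eta$.

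For $B_\eta^*B_\eta$ I would apply Lemma \ref{lem:deriv} with the score function taken to be $B_\eta a$ and the second perturbation in a direction $b\in\dot H_\eta$, $\d\eta_t=(1+tb)\d\eta$, so that
\[
\langle B_\eta^*B_\eta a,\,b\rangle_\eta=-E_\eta\Bigl\{\frac{\partial}{\partial t}\bigl(B_{\eta_t}a\bigr)\Big|_{t=0}\Bigr\}.
\]
Writing $G_t=\int g(u)\d\eta_t(u)$, the term $L(a)$ does not depend on $t$, so only $\dot f(G_t)\int g\,a\,\d\eta_t$ survives the differentiation; since $G_t=g_\eta+t\int g\,b\,\d\eta$, the product rule gives $\partial_t(B_{\eta_t}a)|_{t=0}=\ddot f_\eta\bigl(\int g\,b\,\d\eta\bigr)\bigl(\int g\,a\,\d\eta\bigr)+\dot f_\eta\int g\,a\,b\,\d\eta$. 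Interchanging $E_\eta$ with the $\d\eta$-integrals---justified by the standing smoothness and integrability assumptions together with Fubini---turns the right-hand side into $\int\bigl\{\gamma_\eta(v)a(v)+\int\kappa_\eta(v,u)a(u)\d\eta(u)\bigr\}b(v)\,\d\eta(v)$ with $\gamma_\eta,\kappa_\eta$ as in \eqref{eq:np_functions}. When $\dot H_\eta=L_2(\eta)$, $b$ is arbitrary and one reads off $B_\eta^*B_\eta a$; the symmetry $\kappa_\eta(v,u)=\kappa_\eta(u,v)$ confirms this operator is self-adjoint, as it must be.

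For $\dot H_\eta=L_2^0(\eta)$ two changes arise and I would handle them together. First, the paths must now satisfy $\int a\,\d\eta=\int b\,\d\eta=0$, so the functional form of $B_{\eta_t}a$ to feed into Lemma \ref{lem:deriv} is the one centered at $\eta_t$, namely $\dot f(G_t)\int\{g(u)-G_t\}a(u)\d\eta_t(u)+L(a)$; expanding $\int\{g-G_t\}a\,\d\eta_t=\int g\,a\,\d\eta_t-G_t\int a\,\d\eta_t$ to first order---the second piece being $O(t)$ because $\int a\,\d\eta=0$---replaces $g$ by $g-g_\eta$ in every surviving factor, so $\gamma_\eta$ and $\kappa_\eta$ become the centered kernels of the corollary. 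Second, the displayed identity now holds only for mean-zero $b$, hence determines $B_\eta^*B_\eta a$ only modulo an additive constant; since the adjoint takes values in $\dot H_\eta=L_2^0(\eta)$ one projects by subtracting the $\eta$-mean of $\gamma_\eta(\cdot)a(\cdot)+\int\kappa_\eta(\cdot,u)a(u)\d\eta(u)$, and since the centered $\kappa_\eta$ satisfies $\int\kappa_\eta(v,u)\d\eta(v)=0$ this correction is exactly $-\int\gamma_\eta(u)a(u)\d\eta(u)$, which is the stated formula. Finally, the claim that $L\equiv0$ forces $\gamma_\eta=0$ $\eta$-a.e.\ is Proposition \ref{prop:c2} read in this model: with $L\equiv0$ one has $B_\eta a=\dot f_\eta\int g\,a\,\d\eta$, so $E_\eta(B_\eta a)=0$ for all $a\in\dot H_\eta$ reads $\int\gamma_\eta a\,\d\eta=0$ for all such $a$; taking $a\propto\gamma_\eta$ in the $L_2(\eta)$ case, or using in addition $\int\gamma_\eta\,\d\eta=0$ in the $L_2^0(\eta)$ case, gives $\gamma_\eta\equiv0$.

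The differentiations and the Fubini interchange are routine; the one place demanding care is the $L_2^0(\eta)$ case, where one must center the score operator \emph{at the perturbed measure} before invoking Lemma \ref{lem:deriv} and then check that the residual $O(t)$ pieces recombine so that, after the projection onto the mean-zero subspace, precisely the centered kernels and the single correction $-\int\gamma_\eta a\,\d\eta$ survive---no more and no less. I expect that to be the main obstacle.
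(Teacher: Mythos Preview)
Your approach is essentially the paper's: the corollary is stated as an immediate specialization of Theorem \ref{thm:main} (no separate proof is given), and your derivation retraces the Appendix proof of that theorem---differentiate along $\d\eta_t=(1+ta)\d\eta$ for $B_\eta a$, then invoke Lemma \ref{lem:deriv} along a second path $\d\eta_t=(1+tb)\d\eta$ for $B_\eta^*B_\eta$, with the $L\equiv0$ claim reduced to Proposition \ref{prop:c2}.

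The one place where your write-up deviates from the paper is the $L_2^0(\eta)$ centering. You center the kernel, feeding $\dot f(G_t)\int\{g-G_t\}a\,\d\eta_t+L(a)$ into Lemma \ref{lem:deriv}; the paper instead centers the \emph{direction}, setting $a_{\eta_t}=a-\eta_t a$ and working with $B_{\eta_t}a_{\eta_t}$. For the $\dot f\!\int g\,a$ piece these two centerings coincide, but the paper's choice also replaces $L(a)$ by $L(a_{\eta_t})=L(a)-(\eta_t a)L(1)$, which is what guarantees the perturbed expression is a genuine score under $\eta_t$---the hypothesis Lemma \ref{lem:deriv} actually needs. Your version is not mean-zero under $\eta_t$ when $L\not\equiv0$, so strictly speaking the lemma does not apply to it; the fix is exactly the paper's $a_{\eta_t}=a-\eta_t a$, after which your remaining steps (Fubini, then projection onto $L_2^0(\eta)$ to produce the $-\int\gamma_\eta a\,\d\eta$ correction) go through unchanged.
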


\section{Applications}\label{sec:app}
\subsection{The Cox model under right- and interval-censorships}\label{sec:cox}
First, consider the Cox model with right-censored data \citep{cox:1975}, the archetype of semiparametric inference.
Let $T$ denote the event time of interest and $Z$ a vector of covariates. The Cox proportional hazards model
specifies that
\begin{equation}\label{eq:cox}
\Lambda(t\mid Z)=\int_0^t\exp(\theta^\T Z)\d\Lambda(s), \hspace{3mm}t\in[0,\tau],
\end{equation}
where $\Lambda(t\mid Z)$ is the conditional cumulative hazard function of $T$ given $Z$, $\theta$
is the regression parameter, $\Lambda(\cdot)$ is the baseline cumulative hazard function,
and $\tau$ is the maximum length of follow-up. Here, $\theta$ is the parameter of interest and $\eta=\Lambda$
is the nuisance parameter. Let $C$ denote the censoring time. Then, the observed data consists of
$\{\delta\equiv I(T\leq C), X\equiv T\wedge C, Z\}$, where $I(\cdot)$ is the indicator function
and $a\wedge b=\min(a, b)$.
The log-likelihood for the observed data is
\begin{equation}\label{eq:cox_lik}
\log p_{\theta,\Lambda}=\delta\log\lambda(X)+\delta\theta^\T Z-\int I(X\geq u)\exp(\theta^T Z)\d\Lambda(u),
\end{equation}
with $\lambda=\dot\Lambda$. Comparing \eqref{eq:cox_lik} with \eqref{eq:lik}, one readily recognizes
that $r(\theta)=\delta\theta^\T Z$, $g(u;\theta)=I(X\geq u)\exp(\theta^\T Z)$,
$f(x)=-x$, and $L(\log\lambda)=\delta\log\lambda(X)$. The last identity means that 
$L$ operates on $\log\lambda$ by evaluating it at $X$ and then multiplying it by $\delta$. Hence,
$\dot r(\theta)=\delta Z$, $\dot g(\cdot;\theta)=Z\exp(\theta^\T Z)I(X\geq\cdot)$, $\dot f(x)=-1$,
and $\ddot f(x)= 0$. By Theorem \ref{thm:main}, we have that $\dot l_{\theta,\Lambda}=\delta Z-\int Z\exp(\theta^\T Z)I(X\geq u)d\Lambda(u)$, $B_\paraml a=\delta a(X)-\int a(u)\exp(\theta^\T Z)I(X\geq u)d\Lambda(u)$,
$B_\paraml^*\dot l_\paraml(\cdot)=E_\paraml\{Z\exp(\theta^\T Z)I(X\geq\cdot)\}$,
and $B_\paraml^*B_\paraml a(\cdot)=\gamma_\paraml(\cdot)a(\cdot)$
with $\gamma_\paraml(\cdot)=E_\paraml\{\exp(\theta^\T Z)I(X\geq\cdot)\}$.
If $Z$ has bounded support and ${\rm pr}(X\geq\tau)>0$, we have that the multiplier 
$\gamma_\paraml(\cdot)$ is bounded above and away from zero.  
It is thus a Category 1 problem, but is special in that the efficient score can be constructed explicitly.
Indeed, the normal equation \eqref{eq:normal} can be solved with
\[\tilde a(\cdot)=\frac{B_\paraml^*\dot l_\paraml(\cdot)}{\gamma_\paraml(\cdot)}=\frac{E_\paraml\{Z\exp(\theta^\T Z)I(X\geq\cdot)\}}{E_\paraml\{\exp(\theta^\T Z)I(X\geq\cdot)\}}.\]
Then, an approximation to the efficient score $\tilde l_\paraml=\dot l_\paraml-B_\paraml\tilde a$ can be constructed
by replacing the expectations in $\tilde a$ with their empirical version, leading to the familiar partial likelihood
 score function for $\theta$ \citep{cox:1975}. Furthermore, under linear independence of $Z$, it is easy to show that Condition \ref{cond:local_id}
is satisfied so that the efficient information is positive definite.

The Cox model under case-1 interval censoring, studied in detailed by Huang (1996), offers an example in Category 2.
The conditional hazard of $T$ given $Z$ is specified by the same model \eqref{eq:cox}, but the observed data now consist of
$\{\delta\equiv I(T\leq U), U, Z\}$, where $U$ is the examination time satisfying $T\indep U\mid Z$. Clearly, the likelihood
for the observed data satisfies the conditions of Proposition \ref{prop:cond} with $\mathcal O_1=\delta$
and $\mathcal O_2=(U, Z)$. The log-likelihood is
\[\log p_{\theta,\Lambda}=\delta\log\left[1-\exp\left\{-\int_0^U\exp(\theta^\T Z)\d\Lambda(u)\right\}\right]-(1-\delta)\int_0^U\exp(\theta^\T Z)\d\Lambda(u).\]
So, we may set $g(\cdot;\theta)=\exp(\theta^\T Z)I(U\geq\cdot)$ and $f(x)=\delta\log\{1-\exp(-x)\}-(1-\delta)x$, so that
$\dot g(\cdot;\theta)=Z\exp(\theta^\T Z)I(U\geq\cdot)$, $\dot f(x)=\delta\exp(-x)/\{1-\exp(-x)\}-(1-\delta)$,
and $\ddot f(x)=-\delta\exp(-x)/\{1-\exp(-x)\}^2$. By Proposition \ref{prop:cond}, the normal equation is in the form
of \eqref{eq:normal1}, which, after straightforward iterated conditional expectation
applied to $\beta_\paraml$ and $\kappa_\paraml$, can be simplified to
\begin{equation}\label{eq:normal_cs}
E_\paraml\left\{s_\paraml^{(0)}(U)\int_0^U \tilde a(u)\d\Lambda(u)I(U\geq \cdot)\right\}=E_\paraml\left\{s_\paraml^{(1)}(U)\Lambda(U)I(U\geq \cdot)\right\},
\end{equation}
where 
$$s_\paraml^{(0)}(U)=E_\paraml\left\{\exp(2\theta^\T Z)O_\paraml(U,Z)\mid U\right\},$$
$$ s_\paraml^{(1)}(U)=E_\paraml\left\{Z\exp(2\theta^\T Z)O_\paraml(U,Z)\mid U\right\}, $$
$$O_\paraml(U,Z)=\exp\{-\exp(\theta^\T Z)\Lambda(U)\}/[1-\exp\{-\exp(\theta^\T Z)\Lambda(U)\}].$$
Assuming that the support of $U$ contains $[0, \tau]$, 
we can take derivative on both sides of \eqref{eq:normal_cs} to find that
\begin{equation}\label{eq:lfd_cs}
\int_0^t \tilde a(u)\d\Lambda(u)=\Lambda(t)\zeta_\paraml(t),\hspace{3mm}t\in[0,\tau],
\end{equation}
where $\zeta_\paraml(t)=s_\paraml^{(1)}(t)/s_\paraml^{(0)}(t)$. So, $\tilde a(t)=\zeta_\paraml(t)+\lambda(t)^{-1}\Lambda(t)\dot\zeta_\paraml(t)$.
Using \eqref{eq:lfd_cs}, one easily obtains the efficient score
\begin{align*}
\tilde l_\paraml&=\dot l_\paraml-B_\paraml\tilde a\\
&=\dot f_\paraml Z\exp(\theta^\T Z)\Lambda(U)-\dot f_\paraml \exp(\theta^\T Z)\int_0^U \tilde a(u)\d\Lambda(u)\\
&=\dot f_\paraml\exp(\theta^\T Z)\Lambda(U)\left\{Z-\zeta_\paraml(U)\right\}.
\end{align*}
\citet{huang:1996} and \citet{van:1998, van:2002} derived the same result for the efficient score 
by orthogonal projections. However, their $\dot H_\Lambda$ is specified as the space of differences in $\Lambda$
 so that their least favorable direction is essentially $\tilde\Lambda(\cdot)=\int_0^\cdot \tilde a(u)\d\Lambda(u)$.
With this comes the need to construct an Approximately Least-Favorable Sub-model for the maximum likelihood estimator 
$\hat\Lambda$ as its perturbed form $\hat\Lambda+h^\T\tilde\Lambda$ $(h\in\mathbb R^p)$ need not be
a non-decreasing function with $\hat\Lambda$ being a step function \citep[\S 25.11]{van:1998}. 
Such construction is not necessary in our approach as the perturbed form
$\int_0^\cdot(1+h^\T\tilde a)\d\hat\Lambda$ is always non-decreasing for $h$ sufficiently close to zero provided that
regularity conditions are in place to guarantee that $\tilde a$ is bounded and of bounded variation.

\subsection{Transformation models for recurrent events}\label{sec:rec}
Consider 
a recurrent event regression model studied by \citet{zeng:2006}. Let $N^*(t)$ count the number of events by time $t$ and
let $Z(t)$ denote the time-dependent left-continuous covariate processes, 
$t\in[0,\tau]$. Let $\mathcal F_t=\{N(u), Z(u+):0\leq u\leq t\}$
denote the history of the subject up to $t$. The model specifies that the cumulative intensity function of $N^*(t)$ with respect
to the filtration $\mathcal F_t$ takes the form
\begin{equation}\label{eq:model_rec}
\Lambda(t\mid Z)=G\left[\int_0^t\exp\left\{\theta^\T Z(u)\right\}\d\Lambda(u)\right],
\end{equation}
where $G$ is a known transformation function. The choice of $G(x)=x$
yields the familiar proportional intensity of model of \citet{andersen:1982}.

Let $C$ denote the censoring time and write $N(t)=N^*(t\wedge C)$. Then, the observed data
consist of $\{N(\cdot), C, Z\}$. The log-likelihood for the observed data can be written as
\begin{equation}\label{eq:lik_rec}
\log p_{\theta,\Lambda}(X)=\int \left[\log\{\dot G_\paraml(t)\}+\theta^\T Z(t)+\log\lambda(t)\right]\d N(t)-G_\paraml(\tau).
\end{equation}
Here and for the rest of the sub-section, $F_\paraml(t)=F\left[\int_0^tI(C\geq u)\exp\left\{\theta^\T Z(u)\right\}\d\Lambda(u)\right]$ for any function $F$.
Set $r(\theta)=\theta^\T\int Z(t)\d N(t)$ and $L(\log\lambda)=\int\log\lambda(t)\d N(t)$. Here, we will use a slight modification
of Theorem \ref{thm:main} by letting the function $g$ be further indexed by $t$, that is, $g(u; \theta)=\{g_t(u; \theta):t\in[0,\tau]\}$,
where $g_t(u; \theta)=I(u\leq C\wedge t )\exp\{\theta^\T Z(u)\}$.
Then, for $x=\{x_t:t\in [0, \tau]\}$, we set $f(x)=\int \log\{\dot G(x_t)\}\d N(t)-G(x_\tau)$.
Provided that $N(\tau)<\infty$, the function $g_t(u; \theta)$ depends on $t$ only on a finite set of points and is thus essentially vector-valued
as discussed in Remark \ref{rem:extend}. Calculation of the quantities in \eqref{eq:functions} 
then proceeds by rules of matrix multiplication. For example,
 $\dot f_\param g(\cdot;\theta)$
is essentially a matrix product between a row vector and a column vector and can be conveniently represented as
\[\int H_\paraml(t)g_t(\cdot; \theta)\d N(t)-\dot G_\paraml(\tau)g_\tau(\cdot; \theta),\]
where $H=\ddot G/\dot G$.
So,
\begin{align*}
\gamma_\paraml(\cdot)&=-E_\paraml\left\{\int H_\paraml(t)g_t(\cdot; \theta)\d N(t)\right\}+E_\paraml\left\{\dot G_\paraml(\tau)g_\tau(\cdot; \theta)\right\}\\
&=-E_\paraml\left[Y(\cdot)\exp\{\theta^\T Z(\cdot)\}\int_\cdot^\tau H_\param(t)\d N(t)\right]+E_\paraml\left[Y(\cdot)\exp\{\theta^\T Z(\cdot)\}\dot G_\paraml(\tau)\right]\\
&=E_\paraml\left[Y(\cdot)\exp\{\theta^\T Z(\cdot)\}\dot G_\paraml(\cdot)\right],
\end{align*}
where $Y(\cdot)=I(C\geq\cdot)$ and 
the last equality follows by the martingale property of $N(\cdot)$ and the model specification \eqref{eq:model_rec}.
If $Z$ has bounded support, $\dot G(\cdot)$ is continuous and strictly positive, and ${\rm pr}(C\geq\tau)>0$,
we have that $\gamma_\paraml(\cdot)$ is bounded above and away from zero. If, in addition, the covariate process $Z(\cdot)$
is linearly independent with probability one, then Condition \ref{cond:local_id} is satisfied and so the efficient
information for $\theta$ is positive definite by Corollary \ref{cor:pd}.

In the Supplementary Material, we consider a related but more involved example with
semiparametric regression for competing risks data (Mao \& Lin, 2017). The calculation therein extends Theorem \ref{thm:main} to multiple nuisance 
parameters as alluded to in Remark \ref{rem:extend}.

\subsection{Nonparametric models}
We first consider a standard example of one-sample right-censored data. 
Let the event time of interest be $T$ with cumulative hazard function $\Lambda$.
Let $C$ denote the censoring time with $C\indep T$. The observed data consist of
$\{X\equiv T\wedge C, \delta\equiv I(T\leq C)\}$. The goal is to estimate the survival
function $S(t)=\exp\{-\Lambda(t)\}\equiv\chi_t(\Lambda)$. By straightforward calculation, one finds that
the pathwise derivative is $\dot\chi_t(\Lambda)(\cdot)=-S(t)I(\cdot\leq t)$.

Using Corollary \ref{cor:np} and by calculations similar to those in \S \ref{sec:cox}, we can 
easily obtain that $B_\param a=\int a(u)\d M_\Lambda(u)$, where $M_\Lambda(s)=I(X\leq s)-\int_0^s I(X\geq u)\d\Lambda(u)$, 
and $B_\eta^*B_\eta a(\cdot)=\pi(\cdot)a(\cdot)$, where $\pi(\cdot)={\rm pr}(X\geq\cdot)$. Solution to the normal equation \eqref{eq:np_normal}
gives $\tilde a_t(\cdot)=-S(t)\pi(\cdot)^{-1}I(\cdot\leq t)$. Hence the efficient influence function for $S(t)$
is
\[B_\param\tilde a_t=-S(t)\int_0^t\pi(u)^{-1}\d M_\Lambda(u).\]
This is precisely the influence function, or influence curve if viewed as a process indexed by $t$ \citep{vanderlaan:2003}, of the 
familiar Kaplan--Meier estimator \citep[Ch. 6]{fleming:1991}. This result reaffirms the well-known semiparametric efficiency of
the Kaplan--Meier estimator.

A distinct class of nonparametric models involves a distribution function $\eta$, but,
 unlike the above example, data conforming to distribution $\eta$ are never directly observed. 
 Available is only a coarsened version of such observations with density $p_\eta$. 
The goal is to recover
$\eta$ from a random sample of such coarsened observations. This type of problems are called inverse problems 
\citep{hasminskii:1983, groeneboom:1992, groeneboom:2014}.

As an example, consider a mixture model for observed data $X$, whose density with respect to a dominating measure $\nu$ 
conditioning on a latent variable
$Z$ is a known function $p(x\mid z)$. 
Suppose that $Z\sim \eta$ and that one wishes to make inference on $\eta$
based on a random sample of $X$. The log-likelihood for $X$ is
\[\log p_\eta=\log \int p(X\mid z)\d\eta(z).\]
With reference to Corollary \ref{cor:np}, one has that $g(\cdot)=p(X\mid \cdot)$ and $f(x)=\log x$. So, $\ddot f(x)=-x^{-2}$.
We immediately obtain that $\gamma_\eta(\cdot)\equiv 0$ and that
\begin{align*}
\kappa_\eta(\cdot,u)&=E_\eta\left[p_\eta(X)^{-2}\{p(X\mid \cdot)-p_\eta(X)\}\{p(X\mid u)-p_\eta(X)\}\right]\\
&=\int p_\eta(x)^{-1}\{p(x\mid \cdot)-p_\eta(x)\}\{p(x\mid u)-p_\eta(x)\}\d\nu(x).
\end{align*}
Now, solution to the normal equation \eqref{eq:np_normal} depends on the specific form of the kernel $p(x\mid \cdot)$
as well as the functional of interest $\chi(\eta)$. However, it is clear that functionals such as $\eta(z)$ for
a fixed $z$ with a non-smooth pathwise derivative
of $\dot\chi(\eta)(\cdot)=I(\cdot\leq z)-\eta(z)$ are unlikely to be regularly estimable if the kernel $p(x\mid z)$
is smooth in $z$.
For a general discussion of regularly estimable functionals in this context, see \citet[\S 7]{van:1991}.

\subsection{Regression models with missing covariates}
Suppose that the conditional density of outcome $Y$ with respect to a dominating measure $\nu$ given regressor $Z$ is specified
through a parametric model $p_\theta(y\mid z)$, where $Z\sim\eta$. 
We leave the dominating measure $\nu$ arbitrary so that the set-up accommodates both categorical and continuous outcomes.
Estimation of $\theta$ is standard if $Z$ is fully observed.
In case of missing data in the regressor, however, the nonparametric component $\eta$ will get entangled with the regression parameter
and complicate inference. Problems of this type have been studied in general settings by \citet{lawless:1999}.
Here we consider a simple case with a single level of missingness in $Z$. Using the notation of \citet{tsiatis:2006},
we denote the coarsened regressor by $X=\mathcal G(Z)$, where $\mathcal G$ is a known many-to-one function.
Let $R=1$ if the full data $(Y, Z)$ are observed and $R=0$ if only the coarsened version $(Y, X)$ is available.
We assume that the data are coarsened at random, that is,
\[{\rm pr}(R=1\mid Y, Z)=\pi(Y, X),\]
where $\pi$ is some arbitrary function for the selection probability.
Provided that $\pi$ involves no aspect of $(\param)$, the log-likelihood for the observed data $\{R, Y, RZ+(1-R)X\}$ is
\[\log p_\param=R\log p_\theta(Y\mid Z)+R\log\dot\eta(Z)+(1-R)\log\int p_\theta(Y\mid z)I\{\mathcal G(z)=X\}\d\eta(z).\]
Thus, we may set $r(\theta)=R\log p_\theta(Y\mid Z)$, $L(a)=Ra(Z)$, $g(u;\theta)=p_\theta(Y\mid u)I\{\mathcal G(u)=X\}$,
and $f(s)=(1-R)\log s$. Therefore, we have that
$\dot f(s)=(1-R)s^{-1}$, $\ddot f(s)=-(1-R)s^{-2}$, and $\dot g(u;\theta)=\dot l_\theta^{\rm F}(Y\mid u)p_\theta(Y\mid u)I\{\mathcal G(u)=X\}$,
where $\dot l_\theta^{\rm F}(y\mid z)=\partial\log p_\theta(y\mid z)/\partial\theta$ is the full-data score function for $\theta$.
Using straightforward calculus, it is not hard to obtain that
\begin{align*}
\alpha_\param(\cdot)&=\int \pi\{y,\mathcal G(\cdot)\}\dot l_\theta^{\rm F}(y\mid \cdot)p_\theta(y\mid \cdot)\d\nu(y),\\
\gamma_\param(\cdot)&=\int \pi\{y,\mathcal G(\cdot)\}p_\theta(y\mid \cdot)\d\nu(y),\\
\beta_\param(\cdot, u)&=I\left\{\mathcal G(\cdot)=\mathcal G(u)\right\}\int q_\param\left\{y,\mathcal G(u)\right\}^{-1}\dot l_\theta^{\rm F}(y\mid u)
p_\theta(y\mid u)p_\theta(y\mid \cdot)\d\nu(y),\\
\kappa_\param(\cdot, u)&=I\left\{\mathcal G(\cdot)=\mathcal G(u)\right\}\int q_\param\left\{y,\mathcal G(u)\right\}^{-1}
p_\theta(y\mid u)p_\theta(y\mid \cdot)\d\nu(y),\\
\end{align*}
where $q_\param(y,x)=\int p_\theta(y\mid z)I\{\mathcal G(z)=x\}\d\eta(z)$. 
\begin{proposition}
Suppose that the following two conditions hold:
\begin{description}
\item[](a) $\pi(Y,X)\geq\delta$ for some $\delta>0$;
\item[](b) $E_\eta\{\dot l_\theta^{\rm F}(Y \mid Z)^{\otimes 2}\mid Z\}$ is positive definite almost surely.
\end{description}
Then, the efficient information $\tilde I_\param$ is positive definite.
\end{proposition}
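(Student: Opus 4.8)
The plan is to show this is a Category~1 problem and then invoke Corollary~\ref{cor:pd}(1). The first step bounds the multiplier $\gamma_\param$. From the formula $\gamma_\param(\cdot)=\int \pi\{y,\mathcal G(\cdot)\}p_\theta(y\mid \cdot)\d\nu(y)$, and since $p_\theta(\cdot\mid z)$ integrates to one against $\nu$, condition~(a) gives $\gamma_\param(\cdot)\geq\delta$ while $\pi\leq 1$ gives $\gamma_\param(\cdot)\leq 1$; hence $M^{-1}\leq\gamma_\param(\cdot)\leq M$ with $M=\delta^{-1}$. I would also record, as the routine regularity point subsumed by the discussion following Theorem~\ref{thm:main}, that $\kappa_\param$ is square-integrable with respect to $\eta\times\eta$ so that the integral part of $B_\param^*B_\param$ defines a compact operator; this holds under mild boundedness of $p_\theta(y\mid\cdot)$ and of $q_\param^{-1}$ on the coarsening slices.

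The substantive step is to verify Condition~\ref{cond:local_id}. Substituting $\dot r(\theta)=R\dot l_\theta^{\rm F}(Y\mid Z)$, $L(a)=Ra(Z)$, $\dot f_\param=(1-R)q_\param(Y,X)^{-1}$, and the given forms of $\dot g$ and $g$, equation~\eqref{eq:local_id} becomes
\[R\bigl\{h^\T\dot l_\theta^{\rm F}(Y\mid Z)+a(Z)\bigr\}+(1-R)\,q_\param(Y,X)^{-1}\int\bigl\{h^\T\dot l_\theta^{\rm F}(Y\mid u)+a(u)\bigr\}p_\theta(Y\mid u)I\{\mathcal G(u)=X\}\d\eta(u)=0\]
$P_\param$-almost surely. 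I would restrict to the event $\{R=1\}$. Since the data are coarsened at random with $\pi(Y,X)\geq\delta>0$, the conditional law of $(Y,Z)$ given $R=1$ has a density bounded above and below by positive constants relative to $p_\theta(y\mid z)\d\nu(y)\d\eta(z)$, so these measures are equivalent and the displayed identity forces $h^\T\dot l_\theta^{\rm F}(Y\mid Z)+a(Z)=0$ for $p_\theta(y\mid z)\d\nu(y)\d\eta(z)$-almost every $(y,z)$.

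Fixing such a $z$, the term $a(z)$ does not depend on $y$, so $h^\T\dot l_\theta^{\rm F}(y\mid z)=-a(z)$ for $p_\theta(y\mid z)\d\nu(y)$-almost every $y$. Integrating against $p_\theta(y\mid z)\d\nu(y)$ and using $E_\eta\{\dot l_\theta^{\rm F}(Y\mid Z)\mid Z=z\}=0$ gives $a(z)=0$; then $h^\T\dot l_\theta^{\rm F}(y\mid z)=0$ for almost every $y$, so $h^\T E_\eta\{\dot l_\theta^{\rm F}(Y\mid Z)^{\otimes 2}\mid Z=z\}h=0$, which by condition~(b) forces $h=0$. Hence $h=0$ and $a\equiv 0$ in $\dot H_\eta$, so Condition~\ref{cond:local_id} holds, the $\{R=0\}$ part of the identity then being vacuous. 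Combined with the multiplier bound, Corollary~\ref{cor:pd}(1) delivers positive definiteness of $\tilde I_\param$.

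The one genuinely delicate point is the reduction of the $P_\param$-almost-sure identity to the pointwise-in-$z$ statement, which is exactly where positivity of the selection probability in~(a) enters: without it the $\{R=1\}$ part of the identity would pin down $a(z)$ only where the selection probability is positive, and one would be thrown back on a harder integral-equation analysis akin to Category~2. Everything after that reduction is the familiar observation that a function of $(Y,Z)$ which is simultaneously a full-data score direction and $\sigma(Z)$-measurable must vanish, pure bookkeeping given~(b).
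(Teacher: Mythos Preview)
Your proof is correct and follows the same strategy as the paper: bound the multiplier $\gamma_\param$ using condition~(a), verify Condition~\ref{cond:local_id} using both (a) and (b), and then invoke Corollary~\ref{cor:pd}(1). The paper's proof is terse, merely asserting that (a) and (b) suffice to verify Condition~\ref{cond:local_id}; you have carried out precisely the argument the paper leaves to the reader, including the key use of $\pi\geq\delta$ to pass from the $P_\param$-a.s.\ identity on $\{R=1\}$ to an identity under the full-data law, and the standard score-orthogonality step to separate $h$ from $a$.
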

\begin{proof}
The multiplier $\gamma_\param(\cdot)$ is clearly bounded above, and is bounded away from zero by (a). One can easily use (a) and (b) to
verify Condition \ref{cond:local_id}.
The result follows by Corollary \ref{cor:pd}.
\end{proof}

\section{Remarks}
To summarize, the proposed approach to calculation of efficient information in semiparametric models
can be streamlined in three steps.
First, write out the log-likelihood in the form of \eqref{eq:lik} and recognize the functions $r$, $g$, $f$, and $L$.
Second, with reference to Propositions \ref{prop:c2} \& {\ref{prop:cond}, calculate the needed parts for the score and information
operators according to the formulas in Theorem \ref{thm:main}. Finally, check if reasonable assumptions can be made
to satisfy the conditions of Corollary \ref{cor:pd} and thus to conclude that the efficient information is positive definite.  

The specified form of log-likelihood \eqref{eq:lik} seems to be general enough to encompass a 
surprisingly large pool of existing semiparametric models. 
It is thus reasonable to expect our framework to be amenable and useful to many new models to come.
Straightforward extensions to Theorem \ref{thm:main} exist to further expand on its applicability.
For example, the function $f$ can be made dependent on $\theta$, which will
accommodate the log-likelihoods for frailty models in survival analysis \citep[see, e.g., ][]{kosorok:2004}.
Furthermore, the conventional derivative of $g(u;\theta)$ with respect to $\theta$ can be 
replaced by a generalized
derivative, e.g., one such that $\d I(x\geq u)/\d x=I(x=u)$.
This generalization is useful when applied to the accelerated failure time model \citep{buckley:1979}, 
where $g(u;\theta)$ might be in the form of $I(X-\theta^\T Z\leq u)$. 

 Our framework is most useful when the likelihood is naturally indexed 
jointly by a Euclidean parameter and an infinite-dimensional parameter.
Other semiparametric models are more easily formulated through, say, moment or conditional moment constraints \citep[see, e.g., ][\S 6.2]{bickel:1993}. 
For such models,
it is usually easier to derive information operators via direct projection methods.

\appendix

\section*{Appendix}

\begin{proof}[Proof of Theorem \ref{thm:main}]
Calculations for $\dot l_\param$ and $B_\param a$ are straightforward. We thus focus on exhibiting the forms of $B_\param^*\dot l_\param$
and $B_\param^*B_\param a(\cdot)$. By Lemma \ref{lem:deriv}, if $\d\eta_t=(1+tb)\d\eta$, where $b$ is a bounded function with bounded variation
 in $\dot H_\eta$, then
\begin{align}\label{eq:adjoint}
\<B^*_\param\dot l_\param, b\>_\eta&=-E_\param\left(\frac{\partial}{\partial t}\dot l_{\theta,\eta_t}\right)\Bigg|_{t=0}\notag\\
&=-E_\param\left\{\ddot f_\param\int \dot g(s;\theta)\d\eta(s)\int g(u;\theta)b(u)\d\eta(u)+\dot f_\param\int \dot g(u;\theta)b(u)\d\eta(u)\right\}\notag\\
&\equiv \int c_\param(u)b(u)\d\eta(u).
\end{align}
The results for $B^*_\param\dot l_\param(\cdot)$ then follow by equating it to $c_\param(\cdot)$ if $\dot H_\eta=L_2(\eta)$
 and to $c_\param(\cdot)-\int c_\param(u)\d\eta(u)$ if $\dot H_\eta=L_2^0(\eta)$.

Now, consider $B_\param^*B_\param a_\eta(\cdot)$, where $a_\eta\in\dot H_\eta$. 
Here we have used $a_\eta$ instead of $a$ to stress the possible 
local dependence of the direction $a$ on $\eta$. 
If $\dot H_\eta=L_2(\eta)$ and $\d\eta_t=(1+tb)\d\eta$, since $b$ is bounded, we have that $\dot H_{\eta_t}=L_2(\eta)$ for all $t$.
So, $B_{\theta,\eta_t}a_\eta$ is a score function under $(\theta, \eta_t)$. Thus, $B_\param^*B_\param a_\eta(\cdot)$ can be derived 
from $\<B_\param^*B_\param a_\eta, b\>_\eta=-E_\param\left(\partial B_{\theta,\eta_t}a_\eta/\partial t\right)|_{t=0}$ similarly
to \eqref{eq:adjoint}. For $\dot H_{\eta_t}=L_2^0(\eta)$, however, a fixed score $a_\eta$ does not generally have $\eta_t$-mean zero so that
$a_\eta\notin\dot H_{\eta_t}$. To circumvent this problem, set $a_{\eta_t}=a-\eta_t a$ and apply the previous calculations to the score function 
$B_{\theta,\eta_t}a_{\eta_t}$ to obtain the desired result. 
\end{proof}

\end{document}